\def\scr{\mathcal}
\def\Conv{\operatorname{Conv}}
\def\ditemfirst#1#2{\begin{enumerate}\item \label{#1} #2\suspend{enumerate}}
\newtheorem{lem}{Lemma}
\newtheorem{thm}{Theorem}
\newtheorem{cor}{Corollary}
\def\<{\langle}
\def\>{\rangle}
\def\R{\mathbb R}
\def\Power{\mathcal P}
\title{Proper Scoring Rules and Domination}
\author{Alexander R. Pruss}
\begin{document}
\sloppy
\begin{abstract}
I generalize a theorem of Predd, et al.~(2009) on domination and strictly proper scoring rules to the case of non-additive scoring rules.
\end{abstract}

\maketitle
Let $\Omega$ be a finite space. Let $\scr C$ be the set of all functions from the power set $\Power\Omega$ to the reals: these we call
credence functions. Let $\scr P$ be
the subset of $\scr C$ which consists of the functions satisfying the axioms of probability. A scoring or inaccuracy rule is a function
$s$ from $\scr C$ to $[0,\infty]^\Omega$. Thus, $s(c)$ for a $c\in\scr C$ is a function from $\Omega$ to $[0,\infty]$, and its value
at $\omega\in\Omega$ represents how inaccurate having credence $c$ is if the true state is $\omega$.

Given a probability $p \in \scr P$, let $E_p$ be the expected value with respect to $p$. Then a scoring rule $s$ is said to be
proper provided that for all $p\in \scr P$ and $c\in\scr C$, $E_p s(p) \le E_p s(c)$, where
$$
    E_p f = \sum_{\omega \in \Omega, p(\{\omega\})\ne 0} p(\{\omega\}) f(\omega)
$$
is the mathematical expectation with a tweak to ensure well-definition in case $f$ is infinite at some points with probability zero.
If the inequality $E_p s(p) \le E_p s(c)$ is always strict, the rule is said to be strictly proper.

Enumerate the elements of $\Omega$ as $\omega_1,\dots,\omega_n$.
Let $T$ be the set of vectors $v=(v_1,\dots,v_n)$ in $\R^n$ with $v_i\ge 0$ and $\sum_i v_i = 1$. For $v\in T$, let $p_v$ be the probability
function such that $p_v(\{\omega_i\}) = v_i$ for all $i$. Say that a scoring rule is continuous on probabilities if the function
$\hat s(v) = s(p_v)$ is a continuous function from $T$ to $[0,\infty]^\Omega$, where the latter space is equipped with the product topology.

Say that a member of $[0,\infty]^\Omega$ is finite if it never takes on the value $\infty$; otherwise, say it's infinite.
Let $S_s = \{ s(p) : p\in \scr P \}$ be the set of all scores of probabilities, and let $F_s = S_s \cap [0,\infty)^\Omega$
be the set of finite scores.

\begin{thm}\label{th:domination} Suppose $s$ is strictly proper and that $S_s$ is the closure of $F_s$. Then for any
    $c\in \scr P\backslash\scr C$, there is a probability $p$ such that $s(p) < s(c)$ everywhere on $\Omega$.
\end{thm}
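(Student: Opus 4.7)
The plan is to argue by contradiction via a subgradient / KKT analysis on the simplex $T$. Suppose no probability $p \in \scr P$ satisfies $s(p) < s(c)$ pointwise on $\Omega$; I will produce one.

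First I set up the relevant semicontinuity, using the hypothesis $S_s = \overline{F_s}$. Define $G : T \to [0, \infty)$ by $G(v) := E_{p_v} s(p_v)$. By strict propriety, $G(v)$ is the infimum of $c' \mapsto E_{p_v} s(c')$ over $c' \in \scr C$, uniquely attained at $c' = p_v$; moreover $G(v) < \infty$ always, since $E_{p_v} s(p_v) = \infty$ would make the strict inequality $E_{p_v} s(p_v) < E_{p_v} s(c')$ impossible in $[0, \infty]$. The hypothesis $S_s = \overline{F_s}$ lets me further represent $G$ as an infimum over \emph{finite}-score credences: every $s(p_v)$ is a product-topology limit of finite scores $s(p_n) \in F_s$, and coordinatewise convergence (with the convention $0 \cdot \infty = 0$) gives $E_{p_v} s(p_n) \to E_{p_v} s(p_v)$. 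As an infimum of continuous linear functions of $v$, $G$ is therefore upper semicontinuous on $T$.

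Now let $F(v) := E_{p_v} s(c) - G(v)$. Since $c \notin \scr P$, $c \neq p_v$ for every $v$, and strict propriety gives $F > 0$ on $T$. The map $v \mapsto E_{p_v} s(c)$ is lower semicontinuous (under $0 \cdot \infty = 0$) and $-G$ is lsc, so $F$ is lsc on the compact simplex; hence $\delta := \min_T F > 0$ is attained at some $v^* \in T$. At this minimizer I apply a subdifferential / KKT analysis. Since $G$ is the pointwise infimum of linear functions of $v$ and strict propriety makes $p_{v^*}$ the unique active credence in that infimum, the supergradient of the concave $G$ at $v^*$ is the singleton $\{s(p_{v^*})\}$. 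Consequently the subdifferential of the convex $F$ at $v^*$ is $\{s(c) - s(p_{v^*})\}$, and the first-order optimality condition for minimizing $F$ on the simplex yields a Lagrange multiplier $\alpha \in \R$ with $s(c)(\omega_i) - s(p_{v^*})(\omega_i) \ge \alpha$ for every $i$, with equality on $\mathrm{supp}(v^*)$. Dotting with $v^*$ identifies $\alpha = F(v^*) = \delta > 0$.

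To finish, invoke $S_s = \overline{F_s}$ one more time and approximate $s(p_{v^*})$ in the product topology by finite scores $s(p_n) \in F_s$. For $n$ large: on coordinates $\omega_i$ with $s(c)(\omega_i) < \infty$, convergence together with the KKT margin $\delta$ gives $s(p_n)(\omega_i) < s(c)(\omega_i)$; on coordinates $\omega_i$ with $s(c)(\omega_i) = \infty$, the mere finiteness of $s(p_n)$ gives $s(p_n)(\omega_i) < \infty = s(c)(\omega_i)$ for free. Hence some $p_n$ satisfies $s(p_n) < s(c)$ pointwise, the desired contradiction. The main obstacle I expect is exactly this bookkeeping for infinite values: $s(p_{v^*})$ may be infinite on coordinates outside $\mathrm{supp}(v^*)$, which makes the raw KKT inequality vacuous on those coordinates, and the hypothesis $S_s = \overline{F_s}$ is really doing double duty—first to make $G$ upper semicontinuous (so that the KKT minimizer $v^*$ exists), and then to replace $s(p_{v^*})$ by a finite-valued score that genuinely strictly dominates $s(c)$ pointwise.
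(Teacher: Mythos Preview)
Your proposal contains a genuine gap at the KKT step. You assert that the supergradient of $G$ at $v^*$ is the singleton $\{s(p_{v^*})\}$, on the grounds that $p_{v^*}$ is the unique active credence in the infimum $G(v)=\inf_{p:\,s(p)\in F_s}E_{p_v}s(p)$. But if $v^*\in\partial T$ and $s(p_{v^*})$ takes the value $\infty$ at some coordinate $j$ with $v^*_j=0$, then $s(p_{v^*})\notin F_s$: it is not even in the index set of your infimum, so it cannot be ``active'', and it is not a vector in $\R^n$, so it cannot be a supergradient at all. In this situation the infimum over $\{p:s(p)\in F_s\}$ is not attained, and the superdifferential of $G$ at $v^*$ (which, for a support-function-type concave $G$, is the exposed face of $\overline{\Conv(\phi[F_s])}$ in direction $v^*$) may be empty or may be a set of genuine convex combinations rather than a single score.

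You flag exactly this scenario as ``the main obstacle'', but your proposed repair does not work. You want to approximate $s(p_{v^*})$ by finite scores $s(p_n)\in F_s$ and conclude $s(p_n)(\omega_i)<s(c)(\omega_i)$ on every coordinate with $s(c)(\omega_i)<\infty$. That conclusion needs the KKT margin $s(p_{v^*})(\omega_i)\le s(c)(\omega_i)-\delta$ on those coordinates; but on a coordinate $j$ with $s(c)(\omega_j)<\infty$ and $s(p_{v^*})(\omega_j)=\infty$, the margin fails outright, and then $s(p_n)(\omega_j)\to\infty>s(c)(\omega_j)$, so $s(p_n)$ does not dominate $s(c)$. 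The closure hypothesis $S_s=\overline{F_s}$ gives you approximants, but it cannot manufacture the missing inequality. What is really needed is a second step taking a dominating point in $\overline{\Conv(\phi[F_s])}$ (which a correct KKT argument could produce) down to a dominating point in $\phi[F_s]$ itself; this is precisely the content of the paper's Lemma~\ref{lem:cone2}, and your outline has no analog of it.
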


In other words, given the stated conditions, every score of a non-probability is dominated by the score of a probability (lower scores are better).

\begin{cor}\label{cor:domination} Suppose $s$ is strictly proper and continuous on the probabilities. Then for any
    $c\in \scr P\backslash\scr C$, there is a probability $p$ such that $s(p) < s(c)$ everywhere on $\Omega$.
\end{cor}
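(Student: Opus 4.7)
The plan is to deduce the corollary from Theorem~\ref{th:domination} by showing that continuity on probabilities (together with strict propriety) forces $S_s = \overline{F_s}$. Once that is established, the conclusion is immediate. (I read the hypothesis as $c\in\scr C\setminus\scr P$, since $\scr P\subseteq\scr C$ makes the stated set empty.)

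The first step I would carry out is a standalone observation extracted from strict propriety: for every $p\in\scr P$ and every $\omega\in\Omega$ with $p(\{\omega\})>0$, the value $s(p)(\omega)$ must be finite. Otherwise $E_p s(p)=\infty$, and then the strict inequality $E_p s(p)<E_p s(c)$ cannot hold for any $c\ne p$, since nothing in $[0,\infty]$ exceeds $\infty$. In particular, every full-support probability gets mapped by $s$ into $F_s$.

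The second step is to exploit the geometry of $T$. Let $T^\circ$ denote the relative interior of $T$, i.e.\ the $v\in T$ with all coordinates strictly positive; these are exactly the $v$ for which $p_v$ has full support. By the previous step, $\hat s(T^\circ)\subseteq F_s$. Since $T^\circ$ is dense in $T$ and $\hat s$ is continuous into the product-topology space $[0,\infty]^\Omega$, the image $\hat s(T^\circ)$ is dense in $\hat s(T)=S_s$. Moreover $T$ is compact and $[0,\infty]^\Omega$ is Hausdorff, so $S_s$ is compact and hence closed. Chaining the inclusions
\[
  F_s \;\subseteq\; S_s \;=\; \overline{\hat s(T^\circ)} \;\subseteq\; \overline{F_s} \;\subseteq\; \overline{S_s} \;=\; S_s,
\]
gives $S_s=\overline{F_s}$, so Theorem~\ref{th:domination} applies and delivers the dominating probability $p$.

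The only subtle point I expect to need care is the first step — ruling out $s(p)(\omega)=\infty$ on the support of $p$. Without it, $\hat s(T^\circ)$ need not lie inside $F_s$ and the density argument collapses. Everything else is routine continuity and compactness in the product topology on $[0,\infty]^\Omega$.
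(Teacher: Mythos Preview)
Your proposal is correct and follows essentially the same route as the paper: use strict propriety to show $\hat s(T^\circ)\subseteq F_s$, use compactness of $T$ and continuity of $\hat s$ to get $S_s$ closed, and use density of $T^\circ$ in $T$ to conclude $S_s=\overline{F_s}$, then invoke Theorem~\ref{th:domination}. Your chained inclusion is a slightly cleaner packaging of the paper's sequence-based argument, and your reading of the hypothesis as $c\in\scr C\setminus\scr P$ is the intended one.
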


In the special case of additive scoring rules, this was proved by Predd, et al. (2009).
This result in the case of strict propriety was announced by Richard Pettigrew (2022), but his proof appears deficient. Michael Nielsen
has discovered a proof of this domination result using different methods during approximately the same time period
as I did.

\begin{proof}[Proof of Corollary~\ref{cor:domination}]
    The set $S_s$ is the image of the compact set $T$ under the continuous mapping $v\mapsto s(p_v)$ so it is compact.

    It remains to show that it is equal to $\overline {F_s}$. Note that $[0,\infty]^\Omega$ is metrizable (e.g., one can use the
    metric $d(a,b)=\sum_i |\arctan a_i-\arctan b_i|$), so we can work in terms of sequences. Suppose $(s_n)$ is
    a convergent sequence in $F_s$. Then $s_n = \hat s(v_n)$ for a sequence $(v_n)$ in $T$. Passing to a subsequence,
    by compactness of $T$, we may assume $v_n$ converges to $v\in T$ is compact. Thus, $s_n$ converges to $\hat s(v)$
    by continuity of $\hat s$, and hence the limit of $s_n$ is in $S_s$. Thus, the closure of $F_s$ is a subset of $S_s$.

    Next, observe that if $v \in T^\circ$, so that all the coordinates $v_i$ of $v$ are non-zero and $c$ is any member of $\scr C\backslash \{ p_v \}$,
    then by strict propriety $E_{p_v}(\hat s(v)) < E_{p_v}(\hat s(v'))$, so $E_{p_v}(\hat s(v))<\infty$. If $\hat s(v)=(a_1,...,a_n)$, then
    $E_{p_v}(\hat s(v)) = \sum_i v_i a_i$ since the $v_i$ are all non-zero, and so $a_i$ is finite for all $i$. Thus,
    $\hat s[T^\circ] \subseteq F_s$. Fix $u \in S_s\backslash F_s$. Then $u=\hat s(v)$ for some $v\in\partial T$. Then there will be a sequence
    in $T^\circ$ converging to $v$, and its image under $\hat s$ will be a sequence in $F_s$ converging to $\hat s(v)$ by continuity.
    Thus, the closure of $F_s$ is equals $S_s$, and the proof is complete by Theorem~\ref{th:domination}.
\end{proof}

To prove Theorem~\ref{th:domination}, we will transform the setting to a geometric one. Let $z = \hat s(c)$. Given two points $a$ and $b$
in $(-\infty,\infty]^n$, define the inner product
$$
    \< a, b \> = \sum_{i\in \{ i : 1\le i\le n, a_i\ne 0, b_i\ne 0 \}} a_i b_i.
$$
If both $a$ and $b$ are in $\R^n$ this is the usual inner product.
It is easy to see that $\< \cdot, v \>$ is a continuous function on $(-\infty,\infty]^n$.
A closed half-space in $\R^n$ with outward normal $v\in \R^n\backslash \{ 0 \}$ is
any set  of the form:
$$
    \{ z \in \R^n : \< z,v \> \le a \}
$$
for some $a\in\R$. An open half-space is one defined as above but with strict inequality. The interior $H^\circ$ of a closed half-space is the
corresponding open half-space. If $H$ is a half-space in $\R^n$ with outward normal $v$, say that its extension $H^*$ is the
set
$$
    \{ z \in (-\infty,\infty]^n : \exists w \in H(\<z, v\> \le \< z, w \>) \}.
$$

Say that a closed half-space $H$ separates a set $A$ from a set $B$ provided that $A\subseteq H$ and $B \subseteq \R^n\backslash H^\circ$.
The hyperplane separation theorem says that if $A$ and $B$ are convex and disjoint subsets of $\R^n$, there is such an $H$.

Let $C = (-\infty,0]^n$ be the non-positive orthant of $\R^n$. For a point $z \in (-\infty,\infty]^n$, let $z_i$ be its $i$th coordinate.
For any $v\ne 0$ and $D\subseteq \R^n$, let
$$
    \sigma_D(v) = \sup_{z\in D} \< z, v \>
$$
be the support function of $D$. 

\begin{lem}\label{lem:cone1}
Fix a point $z \in [0,\infty]^n$. Let $D$ be a subset of $\R^n$. Suppose that for all $v\in C$, the support function $\sigma_D(v)$ is finite.
Let $H_v = \{ w : \< w, v \> \le \sigma_D(v) \}$.
Suppose further that $z \in (H_v^\circ)^*$ for every non-zero $v\in C$. Then there is a $z'$ in the
convex hull $\Conv(D)$ such
that $z'_i < z_i$ for all $i$.
\end{lem}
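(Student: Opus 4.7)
The plan is to argue by contradiction using the classical hyperplane separation theorem, after unwinding what the hypothesis actually says. Reading the definitions charitably (the display defining $H_v$ appears to have a typo and should read $\{w:\langle w,v\rangle\le\sigma_D(v)\}$), the condition $z\in(H_v^\circ)^*$ amounts to the scalar inequality $\langle z,v\rangle<\sigma_D(v)$ for every nonzero $v\in C$. The target is then to show that $\Conv(D)$ meets the open lower box $U=\{w\in\R^n:w_i<z_i\text{ for all }i\}$.

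I would first handle the case in which every coordinate of $z$ is finite. Assume for contradiction that $\Conv(D)\cap U=\emptyset$. Both sets are convex and $U$ is open and nonempty, so the separation theorem furnishes a nonzero $v\in\R^n$ and a scalar $c$ with $\langle w,v\rangle\le c$ on $\Conv(D)$ and $\langle w,v\rangle\ge c$ on $U$. Because $U$ is unbounded below in every coordinate, the second inequality forces $v_i\le 0$ for every $i$, i.e., $v\in C\setminus\{0\}$. Letting the slack $z-w\in(0,\infty)^n$ shrink to $0$ then yields $\langle z,v\rangle\ge c$, while taking the supremum over $\Conv(D)$ gives $\sigma_D(v)\le c$. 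Combining these produces $\sigma_D(v)\le\langle z,v\rangle$, contradicting the hypothesis.

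For the general case, set $I=\{i:z_i<\infty\}$ and project to $\R^I$ via the coordinate projection $\pi_I$. The requirement $z'_i<z_i$ is automatic at the infinite coordinates, so it suffices to find $z^*\in\Conv(\pi_I D)=\pi_I(\Conv D)$ with $z^*_i<z_i$ for every $i\in I$, and then lift any preimage. The hypotheses transfer cleanly: a nonzero $v'\in(-\infty,0]^I$ extends to $v\in C$ by zero-padding at coordinates outside $I$, and the padding makes both $\sigma_D(v)=\sigma_{\pi_I D}(v')$ and $\langle z,v\rangle=\langle z|_I,v'\rangle$ hold, since the zero entries of $v$ kill every $0\cdot\infty$ term at an infinite coordinate of $z$. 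Applying the finite case in $\R^I$ finishes the proof; the degenerate subcase $I=\emptyset$ reduces to the observation that $\sigma_D$ being finite on $C$ forces $D$ to be nonempty, so any element of $\Conv(D)$ works.

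The main technical burden is bookkeeping rather than deep geometry: one must verify that the generalized inner product, the extension $H^*$, and the reduction to finite coordinates all cohere, so that a Euclidean separation in $\R^I$ really does pull back to a violation of the original hypothesis on the extended space. Once that is in hand, the argument reduces to the standard half-space characterization of convex sets combined with a projection.
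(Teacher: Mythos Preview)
Your argument is correct and follows the same route as the paper: assume the open lower set $Q=\{w:w_i<z_i\text{ for all }i\}$ misses $\Conv(D)$, separate by a hyperplane, use that $Q$ is unbounded below in each coordinate to force the outward normal into $C$, and conclude $\langle z,v\rangle\ge\sigma_D(v)$, contradicting the hypothesis. The only difference is the treatment of infinite coordinates of $z$: the paper handles all cases at once by taking a sequence in $Q$ converging to $z$ in $(-\infty,\infty]^n$ and invoking continuity of $\langle\cdot,v\rangle$, whereas you first prove the finite case and then reduce to it by projecting onto $\{i:z_i<\infty\}$. Both devices are sound; yours makes the role of the $0\cdot\infty$ convention more explicit, while the paper's avoids the case split.
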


\begin{lem}\label{lem:cone2}
Let $D$ be a subset closed in $\R^n$ such that for every non-zero $v\in C$, there is a unique closed
half-space $H_v$ with outward normal $v$ such that $D\subseteq H_v$ and the boundary of $H_v$
intersects $D$ at exactly one point. Then for any $z\in \Conv(D)$, there is a $z'\in D$ such
that $z'_i \le z_i$ for all $i$.
\end{lem}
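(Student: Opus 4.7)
The plan is to deduce the lemma from an application of the classical KKM Lemma to the spherical simplex $A := C \cap S^{n-1}$, whose vertices are $-e_1,\dots,-e_n$. For each non-zero $v \in C$, let $d(v) \in D$ denote the unique point of $D \cap \partial H_v$, which is also the unique maximizer of $\langle\,\cdot\,,v\rangle$ on $D$. Since $z \in \Conv(D) \subseteq H_v$, the inequality $\langle d(v) - z, v\rangle \ge 0$ holds for every such $v$. For each coordinate $i$, set
$$
V_i = \{v \in A : d(v)_i \le z_i\}.
$$
If each $V_i$ is closed in $A$ and the family $\{V_i\}$ satisfies the KKM covering condition, then KKM supplies some $v^* \in \bigcap_i V_i$, and $z' := d(v^*) \in D$ will satisfy $z' \le z$ componentwise.

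The KKM covering condition is the combinatorial heart of the argument. For a nonempty $I \subseteq \{1,\dots,n\}$ and $v$ in the face $F_I := \{v \in A : v_k = 0 \text{ for } k \notin I\}$, the key inequality becomes
$$
0 \le \sum_{k \in I} v_k \bigl(d(v)_k - z_k\bigr).
$$
Each coefficient $v_k$ is $\le 0$, and at least one $v_k$ is strictly negative (because $v \ne 0$). If $d(v)_k > z_k$ held for every $k \in I$, every summand would be $\le 0$ with at least one strict, forcing the sum to be negative and contradicting the displayed inequality. Hence some $k \in I$ has $d(v)_k \le z_k$, i.e., $v \in V_k$, so $F_I \subseteq \bigcup_{i \in I} V_i$.

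The main obstacle is closedness of $V_i$, which I plan to obtain from continuity of the map $v \mapsto d(v)$ on $C \setminus \{0\}$. For any $v_n \to v$, each coordinate $d(v_n)_k$ is bounded below by the finite constant $m_k := d(-e_k)_k$ supplied by the hypothesis applied at the vertex $-e_k$. Any bounded subsequence of $d(v_n)$ converges (by closedness of $D$) to some $w \in D$, and lower semi-continuity of the support function $\sigma_D$ combined with $w \in D$ sandwiches $\langle w,v\rangle$ equal to $\sigma_D(v)$, so $w = d(v)$ by the uniqueness hypothesis. The delicate part is ruling out unbounded subsequences of $d(v_n)$: if $d(v_n)_j \to +\infty$ for some $j$ with $v_j < 0$, the perturbed direction $v + \epsilon e_j$ lies in $C \setminus \{0\}$ for small $\epsilon > 0$ but forces $\sigma_D(v + \epsilon e_j) = +\infty$, contradicting the hypothesis. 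The remaining boundary case $v_j = 0$ is the trickiest; here $v_n \in V_i$ keeps $d(v_n)_i$ confined to the bounded interval $[m_i, z_i]$, and a projection argument onto the support of $v$ combined with the uniqueness of $d(v)$ identifies the limits of the bounded coordinates of $d(v_n)$ with those of $d(v)$. Once continuity is in hand, each $V_i = d^{-1}(\{w : w_i \le z_i\})$ is closed and KKM completes the proof.
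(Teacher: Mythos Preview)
Your KKM strategy is genuinely different from the paper's argument, and the combinatorial part is clean: the covering condition $F_I \subseteq \bigcup_{i\in I} V_i$ follows exactly as you say from $\langle d(v)-z,v\rangle \ge 0$ together with the sign pattern of $v$. The difficulty is entirely in the closedness of the $V_i$, and here your sketch has a real gap.

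You correctly observe that for $j$ with $v_j<0$ the coordinates $d(v_n)_j$ stay bounded, and that membership in $V_i$ pins $d(v_n)_i$ to $[m_i,z_i]$. But when $v$ lies on the boundary of $C$ there may be indices $k\neq i$ with $v_k=0$, and nothing you have written prevents $d(v_n)_k\to+\infty$ along such a coordinate. In that situation the sequence $d(v_n)$ has no limit point in $D$, so the usual argmax argument (limit point lies in $D$ and attains $\sigma_D(v)$, hence equals $d(v)$ by uniqueness) does not apply. Your ``projection argument onto the support of $v$'' would at best identify the limits of the $J$-coordinates of $d(v_n)$ with those of $d(v)$, but that says nothing about $d(v)_i$ when $i\notin J$; and even for $i\in J$ the identification requires producing an actual point of $D$ with the limiting $J$-projection, which closedness of $D$ does not supply once other coordinates escape to infinity. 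So the step ``once continuity is in hand'' is not in hand, and it is not clear that $d$ is continuous (or that the $V_i$ are closed) under the stated hypotheses alone.

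By contrast, the paper avoids all of this. It never studies the map $v\mapsto d(v)$ at all. Instead it notes that $\Conv(D)\cap(z+C)$ is bounded (using $H_v$ for $v=(-1/n,\dots,-1/n)$), picks $z'$ in that set at maximal distance from $z$, so that $(z'+C)\setminus\{z'\}$ misses $\Conv(D)$, separates $\Conv(D)$ from $z'+C$ by a half-space whose outward normal $v'$ must lie in $C$, and then invokes the uniqueness hypothesis once: since $z'\in\Conv(D)$ lies on $\partial H_{v'}$, it is a convex combination of points of $D\cap\partial H_{v'}$, and that set is a singleton, forcing $z'\in D$. This is a short separation-plus-extremality argument that sidesteps any continuity or KKM machinery. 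If you want to rescue the KKM route, you would need an independent proof that the $V_i$ are closed---for instance by showing $\sigma_D$ is continuous on the polyhedral cone $C$ and then arguing more carefully about subsequential limits---but as written the boundary case is not handled.
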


\begin{proof}[Proof of Lemma~\ref{lem:cone1}]
Let $Q = \{ w \in \R^n : \forall i(w_i < z_i) \}$.
Suppose first that $Q$ does not intersect the convex hull $\Conv(D)$.
Then let $H$ be a half-space with outward normal $v$ that separates $\Conv(D)$ from $Q$.
It follows that there is an $a\in\R$ such that for every $w \in Q$, we have $\< w, v \> \ge a$.
Suppose $v_j > 0$ for some $j$. Choose any member $w'$ of $Q$. Let $b = \< w', v \>$. Let $w_i = w'_i$ for $i\ne j$ and
let $w_i = w'_i - (b-a+1)/v_j$. Then $w \in Q$ and $\< w, v \> = b - (b-a+1) = a-1 < a$, a contradiction. Thus, $v_j \le 0$
for all $j$, and hence $v \in C$.

Then $H_v$ and $H$ have the same outward normal, and since $H_v$ is the smallest closed half-space with that outward normal to contain $D$,
we must have $H_v \subseteq H$. It follows that $H_v$ separates $\Conv(D)$ from $Q$. Let $a=\sigma_D(v)$. Then $H_v = \{ z : \< z,v \> \le a \}$,
and for all $w\in Q$, we have $\< w,v \> \ge a$. Taking a sequence of members of $Q$ converging to $z$ and using the
continuity of $\< \cdot, v \>$, we conclude that $\< z,v \> \ge a$. But this contradicts the claim that $z \in (H_v^\circ)^*$.

Thus, $Q$ intersects $\Conv(D)$. Let $z'$ be any element of their intersection.
\end{proof}

\begin{proof}[Proof of Lemma~\ref{lem:cone2}]
Observe that $\Conv(D)\cap (z+C)$ is bounded. To see this, let
$v=(-1/n,...,-1/n)$. Then $\Conv(D) \subseteq H_v$. But it is easy to see that the $H_v \cap (w+C)$ must be bounded.

Thus, $\Conv(D)\cap (w+C)$ is a compact set. Let $z'$ be the point of that set furthest from $z$.
Any point in $z'+C$ other than $z'$ will be even further from $z$, so $\Conv(D)\cap (z'+C)$ is empty.

Let $H'$ be a closed half-space that separates $\Conv(D)$ from $z'+C$. Since $C$ is self-dual, $H'$ has outward normal $v'$ in $C$. Then $D$
is contained in $H_{v'}$. Then only way $z'$ can be a convex combination of members of $D$, then, is if it is a convex combination of members of
$D$ on the boundary of $H_{v'}$. But there is only one member of $D$ on the boundary of $H_{v'}$ for $v'\in C$, and hence $z'$ must be equal to
that one member. Thus, $z'\in D$, and the proof is complete.
\end{proof}

\begin{proof}[Proof of Theorem~\ref{th:domination}]
Let $\phi:(-\infty,\infty]^\Omega \to (-\infty,\infty]^n$ be the homeomorphism such that $\phi(f)=u$ where $u_i = f(\omega_i)$.
Observe that $E_{p_v}(f) = \< \phi(f), v \>$ for $v\in T$. 
Let $s^*(v) = \phi(\hat s(v))$.

Let $D=\phi[F_s]$. Since all scores are non-negative, if $u\in C$ and $v\in D$, we have $\< v,u \> \le 0$, and so $\sigma_D(u)\le 0<\infty$ for all $u\in C$. 
Fix any $c\in \scr C\backslash \scr P$.
Note that 
For each $s^*(v) \in D$ for $v \in T$, we can let $H_v = \{ w \in (-\infty,\infty]^n : \< w, -v \> \le \< s^*(v), -v \> \}$.
Strict propriety together with the closure assumption then ensures that if $z=\phi(s(c))$, then the conditions of Lemma~\ref{lem:cone1} are satisfied. 
Let $z'$ be as
in the conclusion of Lemma~\ref{lem:cone1}.

Then the conditions of Lemma~\ref{lem:cone2} yield a $z'' \in D$ such that $z''_i \le z'_i$ for
all $i$. Letting $p\in\scr P$ be such that $z''=\phi(s(p))$, our proof is complete.
\end{proof}

\end{document}